\newcommand{\sfT}{\top}
\newcommand{\lp}{\left(}
\newcommand{\rp}{\right)}
\newcommand{\bbS}{\mathbb{S}}
\def\bal{\begin{aligned}}
\def\eal{\end{aligned}}
\newcommand{\mca}{\mathcal{A}}
\newcommand{\mcb}{\mathcal{B}}
\newcommand{\bo}{B_{\mathrm{old}}}
\newcommand{\bn}{B_{\mathrm{new}}}
\newcommand{\an}{A_{\mathrm{new}}}
\newtheorem{theorem}{Theorem}[]
\newtheorem{lemma}[theorem]{Lemma}
\theoremstyle{definition}
\theoremstyle{remark}
\newcommand{\R}{\mathbb{R}}
\newcommand{\be}{\mathbf{e}}
\newcommand{\psd}{\mathbb{S}_+}
\newcommand{\la}{\langle}
\newcommand{\ra}{\rangle}
\newcommand{\lam}{\lambda}
\newcommand{\amin}{\mathrm{argmin}}
\DeclarePairedDelimiterX{\inn}[2]{\langle}{\rangle}{#1, #2}
\DeclareMathOperator{\new}{{\rm new}}
\DeclareMathOperator{\old}{{\rm old}}
\DeclareMathOperator{\cala}{\mathcal{A}}
\def\be{\begin{equation}}
\def\ee{\end{equation}}
\newcommand{\bx}{\mathbf{x}}
\title{A Non-commutative Extension of  Lee-Seung's Algorithm for Positive Semidefinite Factorizations}
\author{
	Yong Sheng Soh \thanks{Y.~S.~Soh (email: \url{matsys@nus.edu.sg}) is with the Department of Mathematics, National University of Singapore and the Institute of High Performance Computing, Agency for Science, Technology and Research.}
	\and Antonios Varvitsiotis\thanks{A.~Varvitsiotis (email: \url{avarvits@gmail.com}) is with the Engineering Systems and Design Pillar, Singapore University of Technology and Design.}}
\begin{document}

\maketitle

\begin{abstract}
Given a data matrix $X\in \mathbb{R}_+^{m\times n}$ with non-negative entries, a Positive Semidefinite (PSD) factorization of $X$ is a collection of $r \times r$-dimensional PSD matrices $\{A_i\}$ and $\{B_j\}$ satisfying the condition $X_{ij}= \mathrm{tr}(A_i B_j)$ for all $\ i\in [m],\ j\in [n]$.  PSD factorizations are fundamentally linked to understanding the expressiveness of semidefinite programs as well as the power and limitations of quantum resources in information theory.  The PSD factorization task generalizes the Non-negative Matrix Factorization (NMF) problem in which we seek a collection of $r$-dimensional non-negative vectors $\{a_i\}$ and $\{b_j\}$ satisfying $X_{ij}= a_i^T b_j$,  for all $i\in [m],\ j\in [n]$ -- one can recover the latter problem by choosing matrices in the PSD factorization to be diagonal.  The most widely used algorithm for computing NMFs of a matrix is the Multiplicative Update algorithm developed by Lee and Seung, in which non-negativity of the updates is preserved by scaling with positive diagonal matrices.  In this paper, we describe a non-commutative extension of Lee-Seung's algorithm, which we call the Matrix Multiplicative Update (MMU) algorithm, for computing PSD factorizations.  The MMU algorithm ensures that updates remain PSD by congruence scaling with the matrix geometric mean of appropriate PSD matrices, and it retains the simplicity of implementation that the multiplicative update algorithm for NMF enjoys.  Building on the Majorization-Minimization framework, we show that under our update scheme the squared loss objective is non-increasing and fixed points correspond to critical points.  The analysis relies on  Lieb's Concavity Theorem.  Beyond PSD factorizations, we show that the MMU algorithm can be also used as a primitive to calculate block-diagonal PSD factorizations and tensor PSD factorizations.  We demonstrate the utility of our method with experiments on real and synthetic data. 
\end{abstract}

\section{Introduction}

Let $X\in \R_+^{m\times n}$ be a $m \times n$ dimensional matrix with non-negative entries and  $r\in~\mathbb{N}$ a user-specified parameter.  
An {\em $r$-dimensional  positive semidefinite (PSD) factorization} of $X$ is given  by two families  of 
  $r\times r$  PSD matrices $A_1,\ldots, A_m$ and $B_1,\ldots,B_n$  satisfying 
\be\label{factorization}
X_{ij}= \tr(A_iB_j),\   i\in [m],\ j\in [n].
\ee
%
Every non-negative matrix admits an $r$-dimensional PSD factorization for an appropriate value of~$r\in\mathbb{N}$--we may, for instance, take $A_i={\rm diag}(X_{i:})$ and $B_j={\rm diag}(e_j)$, a choice that corresponds to  an $n$-dimensional PSD factorization.  The smallest $r\in \mathbb{N}$ for which $X$ admits an $r$-dimensional  PSD factorization is called   the \emph{PSD-rank} \cite{psdrank}.


 PSD factorizations are of fundamental importance   to a wide  range of areas, most notably
 towards  understanding  the  expressive power  of linear optimization over the cone of  positive semidefinite matrices,   \cite{lifts, sam}, 
 studying  the power and  limitations of quantum resources within the framework  of information theory \cite{qcorr,sam}, and  as a natural non-commutative generalization of the  extremely popular dimensionality reduction technique  of    non-negative matrix factorizations (NMFs) \cite{PT, Lsnature, LS00}. We elaborate further on the relevance of PSD factorizations to each of these areas~below.

%
 
\paragraph{Links to Semidefinite Programming.}  A Semidefinite Program (SDP) is a convex optimization problem  in which we minimize a linear function over the set of PSD matrices intersected with an affine subspace.  SDPs are a powerful generalization of Linear Programs
with extensive modeling power and tractable algorithms for solving them, e.g., see \cite{sdp} and references therein.  SDPs are frequently used as convex relaxations to combinatorial problems, and have many important applications   including   optimal power flow computation  \cite{lavaei}, robustness certification to adversarial examples in neural networks \cite{adversarial}, and inference in graphical models \cite{inference}. 

 Given a bounded  polytope $P = \{x\in \R^d: c_i^\sfT x\le d_i, \ i\in [\ell]\}={\rm conv}(v_1,..,v_k)$,  a basic question concerning  the expressive power of SDPs is  to find  the \emph{smallest} possible SDP description of $P$, i.e.,  
 the minimum $r\in \mathbb{N}$ for which we can express $P$ as the projection of an the affine slice %
 of the cone of $r\times r$  PSD matrices. Concretely, the goal in this setting  is  to express $P$ as:
\be\label{psdlift}
P=\pi(\psd^r \cap \mathcal{L}),
\ee
where $\psd^r $ is the cone of  $r\times r$  PSD matrices, $ \mathcal{L}$ is an affine subspace of the space of $r\times r$ symmetric matrices and $\pi$ a linear projection from the space of $r\times r$ symmetric matrices to $\R^d$.   A representation  of the form  \eqref{psdlift} is called an {\em extended formulation} or  {\em PSD-lift} of $P$ and     is extremely useful for optimization purposes. Indeed,  the existence of a PSD-lift immediately implies   that $\min \{\la c, x\ra : x\in P\}=\min\{ \la \pi^\sfT(c), y\ra: y\in  \psd^r \cap \mathcal{L}\}$, and consequently,  linear optimization over $P$  (which can be hard) corresponds to an SDP (which can be solved efficiently).

 
 To  describe the connection of PSD factorizations with  SDP-lifts, 
let  $S_P$ be the  {\em slack matrix} of $P$,  namely, $S_P$ is a rectangular matrix whose rows are indexed by the facets of $P$, its columns indexed by  extreme points $v_j$,  and the $ij$-entry of $S_P$ corresponds to the slack between the $i$-th facet and the  $j$-th vertex,    $S_{ij}=d_i- c_i^\sfT v_j$.    Generalizing a seminal result by Yannakakis for LPs~\cite{yannakakis}, it was shown independently in \cite{lifts} and \cite{sam} that if $S_P$ admits an $r$-dimensional PSD factorization, the polytope $P$ 
admits a PSD-lift over the cone of $r\times r$  PSD matrices. The proof is also constructive -- given a PSD factorization of $S$, there is an explicit description of $\mathcal{L}$ and $\pi$ that gives rise to $P$.

An important special case of the PSD factorization problem is when the PSD factors are block-diagonal PSD matrices, where both the number of  blocks and the size of each block is fixed,~i.e., 
$A_i,B_j\in (\psd^r)^k.$
For a fixed and user-specified $r\in \mathbb{N}$, the least $k\in \mathbb{N}$ for which $X\in \R_+^{m\times n}$   admits a PSD factorization with PSD-factors in $(\psd^r)^k$ is called the $r$-block diagonal PSD-rank  of $X$. In terms of the geometric interpretation of PSD factorizations, block-diagonal PSD factorizations of the slack matrix $S_P$ 
 correspond to  extended formulations of $P$ over  a Cartesian product of  PSD cones, i.e.,  $P=\pi((\psd^r)^k \cap \mathcal{L}).$
In terms of relevance to optimization,  extended formulations over~$(\psd^r)^k$    allow to perform linear optimization over $P$ by solving    block-diagonal SDPs, which can be solved numerically much faster compared to dense SDPs. 
In fact,  most  interior-point algorithms for SDPs are  designed to exploit block-diagonal structure if it present in the problem.  The first systematic study of block-diagonal PSD-lifts was given in \cite{FP13}, where the focus was mainly on lower~bounds.

\paragraph{Links to quantum information theory.}
Consider two  parties, Alice and Bob,   that try to generate samples $(i,j)$ following some joint distribution $P(i,j)$. For this section, it  is crucial  to   think  of the distribution  $P(i,j)$ as being  arranged in an entrywise  non-negative matrix, and we use $P$ to  simultaneously refer  to both the  distribution and its  matrix  representation.  Clearly, if $P$ is not  a product distribution, Alice and Bob should   either communicate  or share some common information to be able  to generate samples according to $P$. In the {\em correlation generation problem}, the goal is to find  the least amount of shared resources that are needed to achieve this task. The considered resources can be  either  classical (shared randomness), quantum (shared entangled state) or hybrid. 

In the quantum case,   correlation generation  boils down to finding  a quantum state   $\rho \in \psd^{r^2}$ with $\tr(\rho)=1$ and quantum  measurements  $E_i, F_j$ (i.e., $E_i,F_j\in \psd^r$ and $\sum_iE_i=\sum_jF_j=I$) such~that 
 \be\label{quantum}
 P(i,j)=\tr( (E_i\otimes F_j)\rho), \   i\in [m],\ j\in [n].
 \ee 
 The least $r\in \mathbb{N}$ for which a factorization of the form \eqref{quantum} is possible is 
 given by the (logarithm) of  the  psd-rank of the matrix $P$~\cite{qcorr}. Moreover, the proof of \cite{qcorr} is constructive, in the sense that, given a $r$-dimensional  PSD factorization of $P$, there is an  explicit description of a quantum state $\rho \in \psd^{r^2}$ and  measurement operators acting on $\mathbb{C}^r$ that satisfy \eqref{quantum}, e.g., see \cite[Proposition 3.8]{psdrank}. 


Moving beyond purely quantum protocols, there has been  recent interest   in  hybrid classical-quantum  protocols, motivated by the fact that near-term quantum devices can  only operate reliably on a limited number of qubits  \cite{blockpsd}. Specifically, assuming that their  quantum capabilities are limited to manipulating $s$ qubits,  hybrid classical-quantum protocols that allow    to generate samples  from a joint distribution $P(i,j)$, correspond to PSD factorizations of~$P$ where the PSD factors are block-diagonal, with block-size   at most $2^s$. Moreover, the minimum amount of classical resources required in a classical-quatum protocol with $s$ qubits, is given by the $2^s$-block diagonal PSD-rank  of $P$. 


\paragraph{Links to nonnegative matrix factorizations.}
 An $r$-dimensional {\em nonnegative matrix  factorization} (NMF) of  $X\in \R_+^{m\times n}$ \cite{PT,Lsnature} is specified by two families  of $r$-dimensional entrywise nonnegative vectors  $a_1,\ldots, a_m\in \R^r_+$ and $b_1,\ldots,b_n\in \R^r_+$  satisfying 
\be\label{nmf}
X_{ij}= \la a_i,b_j\ra,\   i\in [m],\ j\in [n].
\ee
NMF is  a widely used  dimensionality reduction tool that
%
 gives  {\em parts-based representation} of the input data, as   it only allows for additive, not subtractive, combinations.
To make this point clear, note that an equivalent reformulation of an $r$-dimensional  NMF   \eqref{nmf} is   $X=AB$
where $A\in \R^{m\times r}_{+}$ is the matrix whose rows are the $a_i$'s and the matrix $B\in \R_{+}^{r\times n}$ has as columns  the $b_j$'s, or equivalently,
\be\label{nmf2}
X_{:j}\in {\rm cone}(A_{:1}, \ldots, A_{:r}), \ j \in [n].\ee
 The equivalent viewpoint   for NMFs given in \eqref{nmf2} is  more amenable to interpretation, as it  gives a representation of  each column of $X$ (i.e., each data point)  as nonnegative (and thus additive)   combination of the $r$ columns of $A$, and the columns of $B$ give the coefficients of the conic~combination.  
NMF factorizations   have  applications  in many areas,  notable examples including  document clustering \cite{XLG03}, music analysis \cite{fevotte},     speech-source separation  \cite{speech-sep} and  cancer-class identification~\cite{GC05}. For a comprehensive discussion  on  NMFs  the reader is referred to  the survey \cite{gilis} and references~therein.

NMF factorizations are a special case of PSD factorizations where the $r\times r$ PSD matrices $A_i$ and $B_j$ are {\em diagonal}, i.e.,  we have that  $A_i={\rm diag}(a_i)$ and $ B_j={\rm diag}(b_j)$ for some vectors $a_i,b_j\in\R^r_+$ (recall that a diagonal matrix is PSD iff its diagonal entries are nonnegative). Moreover,  given a PSD factorization of $X$, (i.e., $X_{ij}=\tr(A_iB_j)$) for which all the PSD factors $A_i,B_j$ commute (and thus can be simultaneously diagonalized), corresponds to an NMF factorization.  
In this sense, PSD factorizations  are  a natural non-commutative generalization of  NMF factorizations.
 
 \paragraph{Interpretability of PSD factorizations.} An equivalent way to define   an {$r$-dimensional} NMF for a data matrix $X$ (cf. \eqref{nmf})  is through the  existence of  a liner mapping $\mathcal{A} : \mathbb{R}^{r} \rightarrow \mathbb{R}^{m}$  satisfying
 \be\label{nmflatent}
 X_{:j}\in \mathcal{A}(\R^r_+) \ \text{ for all }  j\in [n] \quad \text{ and } \quad  \cala(\R^r_+)\subseteq \R^n_+. 
 \ee
Consequently, the mapping $\cala$ (or rather, 
the  image of the extreme rays of the cone $\R^r_+$ under $\mathcal{A}$),  describe a latent space 
that can generate   all data points  $X_{:j}$ via nonnegative combinations. 

Analogously, in the setting of PSD factorizations, the existence of an $r$-dimensional PSD factorization of  $X$ (cf.~\eqref{factorization})  is equivalent to the existence of a linear mapping $\cala: \mathbb{S}^r \to \R^m$ satisfying
 \be\label{psdlatent}
 X_{:j}\in \mathcal{A}(\psd^r) \ \text{ for all }  j\in [n] \quad \text{ and } \quad  \cala(\psd^r)\subseteq \R^n_+. 
 \ee
Comparing  \eqref{nmflatent} and \eqref{psdlatent}, the difference between NMF and PSD factorizations is immediately apparent. In the setting of PSD factorizations the latent pace is {\em infinite-dimensional}, and specifically, it is the  image of the extreme rays of the cone of $r\times r$ PSD matrices (i.e., all matrices $uu^\sfT$ where $u\in \R^r$) under~$\mathcal{A}$. In this  latent space, each data  point $X_{:j}$ is represented by a PSD matrix $B_j\in \psd^r$, and using 
its spectral decomposition  $B_j=\sum_{i=1}^r\lambda_iu_iu_i^\sfT$, leads to the representation
$X_{:j}=\sum_i\lambda_i\cala(u_iu_i^\sfT).$ Additional details and explicit  examples demonstrating the qualitative difference in expressive power  between  NMF and PSD factorizations are given in Section \ref{sec:numerical}. 

\section{Prior Works on PSD Factorizations and Summary of Results}
 A canonical  starting point for finding an (approximate) $r$-dimensional PSD factorization of a given  matrix $X\in \R_+^{m\times n}$  is     to solve the non-convex optimization problem
\be\label{bigboss}
\inf  \sum_{i,j}(X_{ij}-\tr(A_iB_j))^2 \quad \text{s.t.}\quad  \ A_1, \ldots, A_m, B_1, \ldots B_n\in \psd^r,
\ee
aiming to  find  an approximate $r$-dimensional PSD factorization that minimizes   the square loss over all entries of $X$. 
Fixing  one of the two families  of matrix variables, say   the $A_i$'s, problem \eqref{bigboss} is  separable with respect to  $B_1,\ldots, B_m$. Consequently,   a reasonable solution approach for \eqref{bigboss}  is to  alternate between updating the $A_i$'s and $B_j$'s by solving the   sub-problems:
\begin{align}
A_i&\leftarrow  \arg \inf~~ \sum_{i,j}(X_{ij}-\tr(A_iB_j))^2 \quad \text{s.t.} \quad  A_1,\ldots, A_m\in \psd^r  \label{alt1}\\
B_j&\leftarrow \arg \inf ~~ \sum_{i,j}(X_{ij}-\tr(A_iB_j))^2 \quad \text{s.t.} \quad   B_1,\ldots,B_n\in \psd^r  \label{alt2}
\end{align}
The two sub-problems   in each update step are symmetric in the variables $A_i$ and $B_j$, with the small modification where we replace $X$ with its transpose.  As such, for the remainder of this discussion, we only focus on the sub-problem \eqref{alt2} corresponding to fixing the $A_i$'s and updating the~$B_j$'s. 
 Moreover, \eqref{alt2} is separable with respect to each variable $B_i$, so it suffices to focus on 
 \be\label{acfsdv}
{\inf}  \sum_{i}(X_{ij}-\tr(A_iB_j))^2 \quad \text{s.t.} \quad  B_j\in \psd^r.
 \ee
Lastly, to simplify notation we omit subscripts, and specifically, we denote  by  $x$  the $j$-th column of $X$ and by $B$ the PSD matrix variable~$B_j$.   Defining  $\mathcal{A} : \mathbb{S}^{r} \rightarrow \mathbb{R}^{m}$ to be the linear map 
$	\mathcal{A}(Z) = \left(
	\langle A_1,Z \rangle, \ \ldots \ 
	,\langle A_m,Z \rangle
	 \right),
$  problem~\eqref{acfsdv} can be then equivalently written~as
 \be\label{main}
 { \inf} ~~ \| x - \mathcal{A} (B) \|_2^2 \quad \mathrm{s.t.} \quad  B \in \psd^r.
 \ee
 
 The optimization problem \eqref{main} is convex, and in fact, falls within  the well-studied class of convex quadratic SDPs. Nevertheless, there is no closed-form solution for this family  of  optimization problems, and consequently,   typical solution strategies  rely on     numerical optimization,  e.g., see~\cite{toh}.

 \paragraph{Summary of results.}    In this paper we introduce  and study an iterative algorithm  (Algorithm~\ref{alg}) we call the {\em Matrix Multiplicative Update} (MMU) algorithm for computing PSD factorizations. The MMU algorithm builds  on the Majorization-Minimization framework, and as discussed in the previous section, the main workhorse is an   iterative algorithm for  the convex quadratic SDP~\eqref{main}.  

From a computational perspective, the iterates of the MMU   algorithm  are updated via  conjugation with appropriately defined matrices,  so our method  has the advantage of being {\em simple to implement} and moreover,  the PSDness of the iterates is {\em automatically guaranteed}. From a theoretical perspective, the squared loss objective is non-increasing along the algorithms' trajectories (Theorem \ref{LSstylethm}) and moreover,  its fixed points satisfy the  first-order optimality  conditions (Theorem~\ref{fixedpointsthm}).  The analysis of the MMU algorithm  relies on the use of several operator trace inequalities (including Von Neumann's trace inequality and Lieb's Concavity Theorem).

An important feature of the MMU  algorithm is that if it is initialized with block-diagonal PSD matrices,  the same block-diagonal structure is {\em preserved throughout  its execution}, which 
    leads to  an algorithm  for calculating block-diagonal PSD factorizations. 
  In particular, in Section \ref{sec:applications} we show that if the MMU  algorithm is initialized with  {\em diagonal PSD matrices}, the iterates remain diagonal PSD throughout,  and as it turns out, our algorithm in this case  reduces to   Lee-Seung's seminal Multiplicative Update algorithm  for   computing NMFs \cite{LS00}. Moreover, we show how to the MMU algorithm can be used as a primitive to calculate PSD factorizations of nonnegative tensors. In terms of numerical experiments,  in Section \ref{sec:numerical} we demonstrate the utility of our method for  both synthetic (random Euclidean Distance matrices) and real data (CBCL image dataset). 
   
 \paragraph{Existing work.} 
    
All 
existing algorithms for computing PSD factorizations  employ the alternating minimization approach described in the previous section, where we fix one set of variables and minimize over the other, and essentially boil  down into finding algorithms for  the convex problem~\eqref{main}.


\medskip 
\noindent {\em Projected Gradient Method (PGM).}  The first approach for computing PSD factorizations  is based on  applying PGM to \eqref{main}, alternating  between a gradient step to minimize the objective and  a projection step onto the set of PSD matrices \cite{VGG18}.  The latter projection step uses the following useful fact: Given the spectral decomposition $C=U{\rm diag}(\lam_i)U^\sfT$ of a matrix $C\in \bbS^n$, the projection onto the PSD cone is $U{\rm diag}(\max(0,\lam_i))U^\sfT$ \cite{hig}.  The vanilla PGM has slow convergence rate, so the authors in \cite{VGG18} also propose an accelerated variant that incorporates a momentum term.

\medskip 

\noindent {\em Coordinate Descent.}  The authors in \cite{VGG18} also propose a different algorithm combining the ideas of coordinate descent and  a change of variables that allows them to also control  the rank of the PSD factors, which  was popularized by  the seminal work of Burer and Monteiro for solving rank-constrained SDPs \cite{BM}. 
  Concretely, the authors use the  parameterization  $A_i=a_ia_i^\sfT$, and $B_j=b_jb_j^\sfT$, where $a_i\in \R^{r\times r_{A_i}}$, and $b_j\in \R^{r\times r_{B_j}}$ for some fixed $r_{A_i}, r_{B_j}\in \mathbb{N}$, and optimize using a coordinate descent scheme over  the entries of the matrices $a_i$ and $b_j$.
In this setting,    problem \eqref{main}  
 is a quartic polynomial in the entries of $b$.  Thus, its gradient is a cubic polynomial, 
and its roots can be found  using  Cardano's method (and careful book-keeping). 
 

\medskip 
\noindent {\em Connections to Affine Rank Minimization and Phase Retrieval.}  A different set of algorithms developed in \cite{LF20, LF20b, LLTF20} is based on the connections between computing PSD factorizations with the affine rank minimization (ARM) and the phase retrieval (PR) in signal procesing.  
First, recall that the PSD-ARM problem focuses on  recovering a low-rank matrix from affine measurements:
$$
 \min ~~ \rank(B) \quad \text{s.t.} \quad   \ \cala(B)=x, \ B\in \psd^r.
$$
Here, $\cala$ is a known linear map representing measurements while $x$ is known vector of observations. Due to the non-convexity of the rank function, a useful heuristic initially popularized in the control community  is to replace the rank by  the trace function, e.g., see \cite{MP} and \cite{fazel}, in which case the resulting problem is an instance of an SDP.  A different  heuristic for   PSD-ARM is to find a PSD matrix of rank at most $k$ that  minimizes the squared loss function, i.e., \be\label{cdsfvsdfv}
 \inf~~ \|x-\cala(B)\|_2^2 \quad \text{s.t.} \quad  B\in \psd^r, \ \rank(B)\le k,
\ee
where alternatively, the rank constraint can be enforced  by parametrizing the PSD matrix variable $B\in \psd^r$ as $B=bb^\sfT$ with $b\in \R^{r\times k}$. 
The point of departure for the works  \cite{LF20, LLTF20, LF20b} is that problem~\eqref{cdsfvsdfv}  corresponds exactly to  the sub-problem \eqref{main} encountered in any alternate minimization strategy for computing  PSD factorizations,   albeit with an additional  rank constraint. In view of this, any  algorithm  from the signal processing literature developed for ARM can be applied to \eqref{main}.


The main algorithms considered in \cite{LF20, LF20b, LLTF20}  are Singular Value Projection (SVP)  \cite{SVP},   Procrustes Flow \cite{procrsustes}, and variants thereof.
In terms of convergence guarantees,  for affine maps $\mathcal{A}$  obeying the  Restricted Isometry Property \cite{RIP}, both algorithms converge to an optimal solution. Nevertheless, it is unclear whether these guarantees carry over when applied to the PSD factorization problem. 
 
\paragraph{Roadmap.} In Section \ref{sec2} we derive our MMU algorithm for computing PSD factorizations and in Section \ref{sec:fp} we show that its fixed points   correspond to KKT points. In Section \ref{sec:applications} we give various theoretical applications of the MMU algorithm and in Section \ref{sec:numerical} we go from theory to practise and apply the MMU algorithm to synthetic and real datasets.

\section{A Matrix Multiplicative Update  Algorithm for  PSD Factorizations}\label{sec2}

In this section we describe our algorithm for computing (approximate) PSD factorizations of a matrix $X$. As already discussed, we employ an alternate optimization approach where we alternate between optimizing with respect to the $A_i$'s and the $B_j$'s. 
The sub-problem in each update step symmetric in the variables $A_i$ and $B_j$, with the small modification where we replace $X$ with its transpose.  As such, in the remainder of this discussion, we assume without loss of generality that   the $A_i$'s are fixed and we update the $B_j$'s. The resulting sub-problem we need to solve is \eqref{main}.


\paragraph{Majorization-Minimization (MM) Framework.}  Our algorithm is an instance of the (MM) framework, e.g. see \cite{Lange} and references therein. To briefly describe this approach, suppose   we need to solve the optimization problem  $\min \{F(x): x\in \mathcal{X}\}.$  
The MM framework  relies on the existence of a parametrized family of  auxilliary functions  $u_x: \mathcal{X}\to \R$, one for each $x\in \mathcal{X}$,~where:
\be\label{auxilliary}
 F(y)\le u_x(y),  \ \text{for all } y\in \mathcal{X}\  \text{ and } \ F(x)=u_x(x).
 \ee
Based on these two properties,  $F$ is nonincresasing under the  update rule:
 \be\label{updaterule}
  x^{\new} =\amin \{ u_{x^{\old}}(y):\ y \in \mathcal{X}\},
  \ee
as can be easily seen by the following chain of inequalities
$$f(x^{\new})\le u_{x^{\old}}(x^{\new})\le u_{x^{\old}}(x^{\old})=f(x^{\old}).$$

We conclude with two important remarks concerning  the MM framework. First, note that although the iterates generated by the MM  update rule  \eqref{updaterule} are nonincreasing  in objective function value, there is in general no guarantee that they converge to a minimizer. Secondly,  for the MM approach to be of any use, the auxilliary functions employed at each iteration need to be easy to~optimize.


\paragraph{Matrix Geometric Mean.}  Our choice of auxilliary functions relies  on the well-studied  notion of a geometric mean between a pair  of positive definite matrices, whose definition  we recall next. For additional details and omitted proofs  the reader is referred to  \cite{gmeam} and \cite{bhatia}.
  The matrix geometric mean of  two positive definite matrices $C$ and $D$ is given by 
\begin{equation}\label{GM}
	C\# D= C^{1/2} (C^{-1/2} D C^{-1/2})^{1/2} C^{1/2},
\end{equation}
or equivalently, it is the unique positive definite solution of the Riccati equation 
\be\label{riccati}
XC^{-1}X=D,
\ee in the matrix variable $X$. The matrix geometric mean also has a nice geometric interpretation in terms of  the Riemannian geometry     of the manifold of  positive definite matrices, and specifically, $C\# D$   is the midpoint of the unique geodesic joining $C$ and $D$. 
Finally, the  matrix geometric mean is symmetric in its two arguments  $C \# D = D \# C$ and also satisfies $(C\#D)^{-1} = C^{-1} \# D^{-1}$. 

\paragraph{The MMU Algorithm for PSD Factorizations.}  The main step for deriving our algorithm for approximately computing PSD factorizations is to apply  the MM framework, with a meticulously chosen   auxilliary function,  to the convex quadratic SDP~\eqref{main}. Our main result is the following:

\begin{theorem}\label{LSstylethm}
Consider a fixed vector  $x\in \R^m_+$  and let
$\mathcal{A} : \mathbb{S}^{r} \rightarrow \mathbb{R}^{m}$ be the linear map defined by
$Z\mapsto \mathcal{A}(Z) = \left( \tr (A_1Z), \ \ldots \  ,\tr( A_mZ) \right),$
for some fixed $r \times r$ positive definite  matrices $A_1,\ldots, A_m$. Then, the objective function 
$\| x - \mathcal{A} (B) \|_2^2$ is non-increasing under the update rule
$$\bn = W(\mathcal{A}^\top x) W, \quad \text{where} \quad W= ([\mathcal{A}^{\sfT} \mathcal{A}] (\bo))^{-1}\#(\bo),$$
and moreover, if  initialized with a positive definite matrix, the iterates remain positive definite. 
\end{theorem}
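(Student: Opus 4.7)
The plan is to apply the Majorization--Minimization framework to the convex sub-problem \eqref{main}, reverse-engineering the auxiliary function from the form of the desired update. Specifically, I take
\[
 u_{\bo}(B) \;=\; \|x\|_2^2 \;-\; 2\langle \mathcal{A}^\top x, B\rangle \;+\; \tr(BMBM), \qquad M \,:=\, [\mathcal{A}^\top\mathcal{A}](\bo)\,\#\,\bo^{-1}.
\]
This is strictly convex in $B$; zeroing out its gradient $-2\mathcal{A}^\top x + 2MBM$ yields the unconstrained minimizer $B = M^{-1}(\mathcal{A}^\top x) M^{-1} = W(\mathcal{A}^\top x)W$ with $W = M^{-1} = [\mathcal{A}^\top\mathcal{A}](\bo)^{-1}\,\#\,\bo$, matching the stated update exactly. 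Positive definiteness of the iterate is automatic: $\mathcal{A}^\top x = \sum_i x_i A_i$ is positive semidefinite (every $x_i \geq 0$, $A_i \succ 0$), so congruence by the invertible $W$ keeps $\bn$ in $\psd$ (and strictly in $\psd$ whenever $x \neq 0$). Hence the unconstrained minimizer is feasible and the PSD constraint is inactive.

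The first MM identity $u_{\bo}(\bo) = \|x - \mathcal{A}(\bo)\|_2^2$ drops out of the defining Riccati identity $M\bo M = [\mathcal{A}^\top\mathcal{A}](\bo)$ for the matrix geometric mean, which collapses $\tr(\bo M \bo M)$ to $\tr(\bo[\mathcal{A}^\top\mathcal{A}](\bo)) = \|\mathcal{A}(\bo)\|_2^2$. The tangential majorization $u_{\bo}(B) \geq f(B)$ then boils down to the operator-theoretic inequality
\[
 \sum_{i=1}^m \tr(A_i B)^2 \;\leq\; \tr(BMBM) \quad \text{for every symmetric } B,
\]
and this is what I expect to be the main obstacle of the proof.

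To establish it, I would perform the congruence substitution $Y := M^{1/2} B M^{1/2}$, turning the inequality into $\sum_i \tr(\tilde A_i Y)^2 \leq \|Y\|_F^2$ with $\tilde A_i := M^{-1/2} A_i M^{-1/2} \succeq 0$; equivalently, the linear map $T: Y \mapsto (\tr(\tilde A_i Y))_i$ from symmetric matrices to $\R^m$ has operator norm at most one. Since non-zero eigenvalues of $T^\top T$ and $TT^\top$ coincide, this reduces to showing that the $m \times m$ Gram matrix $G$ with entries $G_{ij} = \tr(\tilde A_i \tilde A_j) = \tr(WA_i WA_j)$ has spectral radius at most $1$. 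The crucial structural observations are: (i) $G$ has entrywise non-negative entries (products of PSD matrices have non-negative trace), and (ii) the Riccati identity forces $G\alpha = \alpha$ for the strictly positive vector $\alpha_i := \tr(A_i \bo) > 0$ --- indeed $T^\top \alpha = M^{-1/2}[\mathcal{A}^\top\mathcal{A}](\bo)M^{-1/2} = M^{1/2} \bo M^{1/2} =: Y_0$ and $TY_0 = \alpha$. The Perron--Frobenius theorem, applied to the entrywise-nonnegative symmetric PSD matrix $G$ possessing the strictly positive eigenvector $\alpha$, then pins the spectral radius of $G$ to be exactly $1$, closing the bound. This Perron step plays the role of the Jensen-type inequality used in the classical Lee--Seung proof, and I expect an alternative derivation through joint operator concavity of the matrix geometric mean (i.e.~Lieb--Ando concavity) to be available. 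The standard MM chain $f(\bn) \leq u_{\bo}(\bn) \leq u_{\bo}(\bo) = f(\bo)$ then delivers monotonicity, while preservation of positive definiteness was already settled by the closed form of $\bn$.
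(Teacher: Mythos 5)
Your proof is correct, and while it shares the paper's Majorization--Minimization scaffolding, it reaches the key technical lemma by a genuinely different route. The auxiliary function you write down is algebraically identical to the paper's (the paper presents it as the second-order Taylor expansion of $F$ at $\bo$ with the Hessian term $\|\mathcal{A}(B-\bo)\|_2^2$ replaced by $\langle B-\bo, M(B-\bo)M\rangle$; expanding and invoking the Riccati identity $M\bo M=[\mathcal{A}^{\sfT}\mathcal{A}](\bo)$ collapses it to your closed form), and the derivation of the update, the tangency condition, and the PSD-preservation argument all coincide. The divergence is in the domination inequality $\tr(BMBM)\ge \sum_i \tr(A_iB)^2$. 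The paper reduces to the case $\bo=I$ by congruence and then invokes Lieb's Concavity Theorem to peel the sum out of the square roots, finishing each term with von Neumann's trace inequality and Cauchy--Schwarz. You instead congruence-normalize $M$ to the identity and reduce the claim to $\lambda_{\max}(G)\le 1$ for the entrywise-nonnegative Gram matrix $G_{ij}=\tr(WA_iWA_j)$, which the Riccati identity forces to fix the strictly positive vector $\alpha_i=\tr(A_i\bo)$; the diagonal-similarity/Perron--Frobenius bound $\rho(G)\le\max_i (G\alpha)_i/\alpha_i=1$ then closes the argument. Your route is more elementary and self-contained (no operator concavity needed), and it makes transparent the analogy with the row-stochasticity argument in the classical Lee--Seung proof; the paper's route via Lieb is shorter once the theorem is granted and points toward generalizations involving other operator means. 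One small bonus of your write-up: you correctly flag that strict positive definiteness of $\bn$ requires $x\neq 0$, a caveat the paper glosses over.
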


\begin{proof}First, note that if the $A_i$'s and $\bo$ are all  positive definite, the  update rule is well-defined. Indeed, we have    $ [\mathcal{A}^{\sfT} \mathcal{A}] (\bo)=\sum_{k=1}^m\tr( A_k\bo) A_k$ is also positive definite, and thus invertible.

Set $F(B):=\| x - \mathcal{A} (B) \|_2^2$ and define the function
\begin{equation} \label{eq:auxilliary}
u_{\bo}(B) := F(\bo)+\la \nabla F(\bo), B-\bo\ra+\la B-\bo, T(B-\bo)),
\end{equation}
where $T: \mathbb{S}^r\to \mathbb{S}^r$ is the operator given  by
\begin{equation*}
T(Z) = W^{-1}ZW^{-1} \quad \text{ and } \quad W= ([\mathcal{A}^{\sfT} \mathcal{A}] (\bo))^{-1}\#(\bo).
\end{equation*}
The claim of the theorem will follow  as an immediate consequence of the MM framework, as long as we establish  that $u_{\bo}(B)$ is an auxilliary function, i.e., it satisfies the two properties given in~\eqref{auxilliary}. 

Clearly, we have that $u_{\bo}(\bo)=F(\bo)$, so it only remains to show the domination property, that is,  
 $u_{\bo}(B)\le F(B), $ for all $B\in \psd^r$.  In fact, we show a slightly stronger result, namely that  $u_{\bo}(B)\le F(B)$ holds  for all symmetric  matrices $B\in \mathbb{S}^r$.  To see this we use the second order Taylor expansion of $F$ at $\bo$, which  as $F$ is quadratic in $B$,  is given by
\begin{equation}\label{eq:taylor}
F(B)=F(\bo)+\la \nabla F(\bo), B-\bo\ra+\|\mca(B-\bo)\|_2^2.
\end{equation}
Comparing the expressions \eqref{eq:auxilliary} and \eqref{eq:taylor}, to show that $F(B)\le u_{\bo}( B)$ for all $B\in \mathbb{S}^r$ it suffices to check that the operator $T - \mathcal{A}^{\sfT} \mathcal{A}$ is positive; i.e., $\langle Z, [T - \mathcal{A}^{\sfT} \mathcal{A}] (Z) \rangle \ge 0$ for any matrix $Z\in \mathbb{S}^r$. 
This claim is the main technical part of the proof,  deferred to Lemma \ref{thm:domination_general} in  the Appendix.

Furthermore, the fact that $T - \mathcal{A}^{\sfT} \mathcal{A}$ is a positive operator, also implies that $T$ is itself   a positive operator. Consequently,  the MM update  \eqref{updaterule} obtained  by using   the auxilliary function \eqref{eq:auxilliary} can be calculated just by setting the gradient equal to zero, and is given by
\begin{equation} \label{eq:psd_update1}
B_{\mathrm{new}} = B_{\mathrm{old}} - T^{-1} \left( [\mathcal{A}^{\sfT} \mathcal{A}] (B_{\mathrm{old}}) - \mathcal{A}^{\sfT}(x) \right).
\end{equation}
Moreover, as $T^{-1} (Z) = W Z W$ and $W= ([\mathcal{A}^{\sfT} \mathcal{A}] (\bo))^{-1}\#(\bo)$ it follows that 
\be\label{cdsvs}
B_{\mathrm{old}} =W ([\mathcal{A}^{\sfT} \mathcal{A}] (B_{\mathrm{old}}))W= T^{-1} ( [\mathcal{A}^{\sfT} \mathcal{A}] (B_{\mathrm{old}})),
\ee
where for the first equality we use the unicity property of the matrix geometric mean (recall \eqref{riccati}). 
Subsequently, using \eqref{cdsvs}, the MM update rule in~\eqref{eq:psd_update1} simplifies to the following:
\be\label{updatenew}
B_{\mathrm{new}} = T^{-1}(\mathcal{A}^\sfT \bx)=W(\mathcal{A}^\sfT \bx)W.
\ee

Lastly, since the  $A_i$'s are PSD, it follows that $\cala^\sfT \bx=\sum_i x_i A_i$ is a conic combination of PSD matrices (recall that $x\in \R^m_+$), and thus, it is itself PSD.   Consequently,  $B_{\mathrm{new}}$ is PSD.   In fact, if the matrices $A_i$ and $B_{\mathrm{old}}$ are positive definite,  the updated matrix $B_{\mathrm{new}}$ is also  positive definite.
\end{proof}

%
%

Having established an iterative method  for problem \eqref{main} that is non-increasing in value and retains PSDness, we can  incorporate  this as a sub-routine  in our alternating optimization scheme  for computing  PSD factorizations. 
The pseudocode  of the resulting method is given in Algorithm~\ref{alg}.


%
\begin{algorithm}[h]
  \caption{ Matrix Multiplicative  algorithm for  computing PSD factorizations}
   \textbf{Input:} A matrix $X\in \R^{m\times n}_{\ge 0}$, parameter $r\in \mathbb{N}$\\
  \textbf{Output:}    $\{ A_1,\ldots, A_m \},\{ B_1, \ldots, B_n \} \subseteq \bbS^r_+$, $X_{ij}\approx \tr(A_iB_j)$ for all $i,j$
 \begin{algorithmic}\label{alg}
  \STATE while stopping criterion not satisfied: 
  \STATE 
  \be\label{updaterule}
  \bal 
A_i & \leftarrow V_i (\mcb^\top x_i) V_i \quad \text{ where } \quad V_i =([\mcb^{\sfT} \mcb] (A_i))^{-1}\# A_i, \quad x_i = X_{ i : }  \\
B_j & \leftarrow W_j (\mathcal{A}^\top x_j) W_j \quad \text{ where } \quad W_j = ([\mathcal{A}^{\sfT} \mathcal{A}] (B_j))^{-1}\#(B_j), \ x_j = X_{: j}
\eal 
\ee
\end{algorithmic}
\end{algorithm}

\if0
Note that by \eqref{PDmatrix} the matrix $[\mathcal{A}^{\sfT} \mathcal{A}] (B_{\mathrm{old}})$ is positive definite, so considering its geometric  mean with the positive definite  matrix $\bo^{-1}$ indeed makes sense. 
Furthermore, condition $(ii)$ is satisfied as the Hessian of the function $\la B, T(B)\ra=\tr(BQBQ)$ is equal to $Q\otimes Q$, which is positive definite as the geometric mean is (by definition) positive definite, and the tensor product of positive definite  matrices is also positive definite.

We conclude with some   general remarks concerning various algorithmic aspects of the MM framework. First, in terms of  its convergence properties,  the fact that  the iterates generated by the MM  algorithm are nonincreasing  in objective function value (which is lower bounded by zero), shows that the sequence of values  $(f(A_k,B_k))_k$ converges.  
Secondly,  for the MM approach to be of any use, the optimization problems  \eqref{MMupdate}  encountered at each iterationneed to be tractable. Third, note that the  update rule  given in \eqref{MMupdate}  is not amenable to parallel implementation  with respect to the $A$'s and the $B$'s, as the update rule for $B$ depends on $\an$. The similar problem is also present in Lee-Seung's original algorithm for NMF, but a new approach  for NMF where the updates can be made concurrently was recently given in~\cite{PSVW}. 


In this work we   use  the MM approapch  to solve the optimization problem \eqref{main}, where for  simplicity of presentation we assume that $A_1, \ldots, A_n$ are fixed and  focus on the $B$'s. Our goal  is to  
minimize $f(A,B)={1\over 2}\sum_i(X_{ij}-\tr(A_iB))^2$, where the matrix variable $B$  is an  $r$-by-$r$ PSD matrix.   To simplify notation, we omit the dependency on the $A$'s (as they are fixed),  and define
$$f(A,B )=F(B).$$ 
  The first step is to find an appropriate family of auxilliary functions. As our objective function
 is a quadratic in $B$,  the second order Taylor expansion at $\bo$ is a tautology, specifically: 
$$F(B)=F(\bo)+\la \nabla F(\bo), B-\bo\ra+\|\mca(B-\bo)\|_F^2,$$
where the transpose of the mapping $\mca$ is given by:
\be
\mathcal{A}^\sfT: \R^m\to \mathbb{S}^r \quad x\mapsto \sum_{i=1}^mx_iA_i.
\ee 
Consequently,  for any invertible linear map $T: \mathbb{S}^r\to \mathbb{S}^r$ satisfying  the following two properties:
\begin{itemize}
\item[$(i)$]  $T-\mathcal{A}^{\sfT} \mathcal{A}:  \mathbb{S}^r\to \mathbb{S}^r$  is a positive operator
\item[$(ii)$] $T: \mathbb{S}^r\to \mathbb{S}^r$ is a positive operator.
\end{itemize}
\fi

\section{Fixed Points of the MMU Algorithm}\label{sec:fp}

In this section, we show that the fixed points of the MMU algorithm satisfy the Karush-Kuhn-Tucker~(KKT)   optimality conditions for problem \eqref{bigboss}. 
 Letting $\{A_i^*\}_{i\in [m]},\{M^*_i\}_{i\in [m]}$ and $ \{B^*_j\}_{j\in [n]}, \{\Lambda^*_j\}_{j\in [n]}$ be   pairs of primal-dual optimal solutions of \eqref{bigboss} with zero duality gap, it is straightforward to verify that the KKT conditions  are 
$$
\begin{aligned}
& \tr(A^*_iM^*_i)=\tr(B^*_j\Lambda^*_j)=0, \quad  i\in [m], j\in [n] \\
&\mcb^\sfT (X_{: i})-[\mcb^\sfT\mcb] (A^*_i)=M^*_i, \quad i\in [m]\\
& \cala^\sfT (X_{: j}) -[\cala^\sfT \cala] (B^*_j)=\Lambda^*_j, \quad j \in [n].
\end{aligned}
$$
%
Furthermore, assuming that the primal optimal solutions $\{A^*_i\}_{i\in [m]}$ and  $\{B^*_j\}_{j\in [n]}$ are all  {\em positive definite}, it follows immediately from the complementary slackness conditions that $M^*_i=\Lambda^*_j=0$ for all $i\in [m], j\in [n]$. Consequently, in the special case of positive definite optimal solutions $\{A^*_i\}_{i\in [m]}$ and  $\{B^*_j\}_{j\in [n]}$ , the KKT conditions reduce to
\be\label{KKT}
\mcb^\sfT (X_{: i})=[\mcb^\sfT\mcb] (A^*_i), \ i\in [m] \quad \text{ and } \quad
 \cala^\sfT (X_{: j}) =[\cala^\sfT \cala] (B^*_j), \ j \in [n].
\ee
Based on the preceding discussion, in the next result (whose proof follows by Lemma \ref{thm:kkt_actualresult} in the  Appendix) shows that we can interpret our MMU algorithm as a fixed-point method for satisfying the KKT optimality conditions corresponding to  problem \eqref{bigboss}.
\begin{theorem}\label{fixedpointsthm}
If $\{A_i\}_{ i\in [m]}$  and $\{B_j\}_{j\in [n]}$  are positive definite fixed points of the update rule of the MWU algorithm given  in~\eqref{updaterule}, then they also satisfy the KKT conditions \eqref{KKT}.
\end{theorem}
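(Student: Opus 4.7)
The plan is to reduce the fixed-point equation for the $B_j$-update (the $A_i$-update is symmetric) to the KKT equation by using the characterization of the matrix geometric mean as the unique positive definite solution of the Riccati equation, which was recalled in \eqref{riccati}.

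More concretely, fix $j\in[n]$ and write $B=B_j$, $x=X_{:j}$, and $W = ([\mathcal{A}^\sfT\mathcal{A}](B))^{-1}\#B$. Since we assume that all $A_i$'s and $B_j$'s are positive definite, the operator $[\mathcal{A}^\sfT\mathcal{A}](B)=\sum_k \tr(A_kB)A_k$ is positive definite and hence invertible, so $W$ is well-defined and itself positive definite. By definition of the matrix geometric mean via \eqref{riccati}, setting $C=([\mathcal{A}^\sfT\mathcal{A}](B))^{-1}$ and $D=B$, the matrix $W=C\#D$ is the unique positive definite solution of $WC^{-1}W=D$, i.e.\
\begin{equation*}
W\,[\mathcal{A}^\sfT\mathcal{A}](B)\,W \;=\; B.
\end{equation*}
This identity was already used inside the proof of Theorem \ref{LSstylethm} (see \eqref{cdsvs}), so I would simply invoke it.

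Next, I would use the hypothesis that $B$ is a fixed point of \eqref{updaterule}, which reads $B=W(\mathcal{A}^\sfT x)W$. Combining this with the Riccati identity above gives
\begin{equation*}
W\,[\mathcal{A}^\sfT\mathcal{A}](B)\,W \;=\; W\,(\mathcal{A}^\sfT x)\,W.
\end{equation*}
Since $W$ is positive definite (hence invertible), multiplying on both sides by $W^{-1}$ yields $[\mathcal{A}^\sfT\mathcal{A}](B)=\mathcal{A}^\sfT x$, which is precisely the second KKT condition in \eqref{KKT}. The first KKT condition, $[\mathcal{B}^\sfT\mathcal{B}](A_i)=\mathcal{B}^\sfT X_{i:}$, follows by an identical argument applied to the $A_i$-update, using the symmetry of the MMU update rule in the two families of factors.

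There is no real obstacle here: the whole content of the theorem is the observation that the matrix geometric mean appearing in $W$ was engineered exactly so that conjugation by $W$ on $[\mathcal{A}^\sfT\mathcal{A}](B)$ returns $B$, which converts the multiplicative fixed-point equation into a linear KKT equation. The only point that requires minor care is justifying the invertibility of $W$ and of $[\mathcal{A}^\sfT\mathcal{A}](B)$, which uses the positive definiteness hypothesis together with the assumption that the $A_i$'s are positive definite; this is why the theorem is stated for positive definite fixed points (where complementary slackness automatically kills the dual multipliers $M_i^*,\Lambda_j^*$, reducing the full KKT system to \eqref{KKT}).
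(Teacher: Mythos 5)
Your argument is correct and is essentially the paper's own proof (Lemma~\ref{thm:kkt_actualresult}): both rest on the Riccati characterization of the geometric mean, the only cosmetic difference being that you cancel $W$ against the identity $W[\mathcal{A}^\sfT\mathcal{A}](B)W=B$ (i.e.\ \eqref{cdsvs}) directly, while the paper first inverts $W$ via $(C\#D)^{-1}=C^{-1}\#D^{-1}$ and then applies the Riccati equation to $W^{-1}$. No gaps.
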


\if0
\vspace{20mm}

In this section we show that fixed points of the update rule $$B^{\mathrm{new}} = T^{-1}(\mathcal{A}^\sfT(x))=Q^{-1}\mathcal{A}^\sfT(x)Q^{-1},$$
where 
$$Q=\bo^{-1}\#[\mathcal{A}^{\sfT} \mathcal{A}] (B_{\mathrm{old}}),$$ correspond to first-order stationary points of  the optimization problem:
\be\label{cvsvsd}
\inf \left\{{1\over 2}\sum_{i,j}(X_{ij}-\tr(A_iB_j))^2: \ A_j, B_j\in \psd^r  \right\}.
\ee
Besides  primal and dual feasibility, the KKT conditions corresponding to \eqref{cvsvsd}  are given by: 
$$
\begin{aligned}
\la  Z^*_i, A^*_i\ra =\la W^*_i,B^*_j\ra &=0\\
-\sum_{j}(X_{ij}-\tr(A^*_iB^*_j))B^*_j(k,l)-Z^*_i(k,l)&=0\\
-\sum_{i}(X_{ij}-\tr(A^*_iB^*_j))A^*_i(k,l)-W^*_j(k,l)& =0.
\end{aligned}
$$
Consequently, for  a positive definite optimal solution $A^*_1, \ldots, A^*_m$, $B^*_1, \ldots, B^*_n$ of \eqref{cvsvsd},    we have that $Z^*_i=W^*_j=0$ and thus, the  KKT conditions reduce to:
$$
\begin{aligned}
\sum_{j=1}^m(X_{ij}-\tr(A^*_iB^*_j))B^*_j(k,l)&=0\\
\sum_{i=1}^n(X_{ij}-\tr(A^*_iB^*_j))A^*_i(k,l)& =0.
\end{aligned}
$$
Equivalently, defining   the linear mappings: $$
  \mcb^\sfT (y)=\sum_{i=1}^my_iB^*_i \  \text{ and } \cala^\sfT(x)=\sum_{j =1}^n x_iA^*_i,
 $$
the KKT conditions can be written succinctly as:
 
\be\label{KKTcompact}
\mcb^\sfT (X_{: i})=[\mcb^\sfT\mcb] (A^*_i)\ \text{ and } 
\cala^\sfT (X_{: j}) =[\cala^\sfT \cala] (B^*_j).
\ee
\begin{theorem}Let $A_1,\ldots, A_n, B_1,\ldots, B_m$  be positive definite matrices that are fixed points of the update rule for PSD factorization, i.e., 
 \be
 \bal 
        A^i&=([\mcb^{\sfT} \mcb] (A^i)^{-1}\#A^i)\mcb^\top(X_{ i : }) ([\mcb^\sfT \mcb] (A^i)^{-1}\#A^i)\quad  i\in [n]\\
  B^j&=([\mathcal{A}^{\sfT} \mathcal{A}] (B^j)^{-1}\#B^j)\mathcal{A}^\top(X_{: j}) ([\mathcal{A}^{\sfT} \mathcal{A}] (B^j)^{-1}\#B^j)\quad   j\in [m].
      \eal 
      \ee
Then, we have that
\be
\mcb^\sfT (X_{: i})=[\mcb^\sfT\mcb] (A^i)\ \text{ and } 
\cala^\sfT (X_{: j}) =[\cala^\sfT \cala] (B^j).
\ee
 
\end{theorem}

\begin{proof}By symmetry it suffices to only consider the $B_j$'s. As $B_j$ is a fixed point we have 
 $$ B^j=([\mathcal{A}^{\sfT} \mathcal{A}] (B^j)^{-1}\#B^j)\mathcal{A}^\top(X_{: j}) ([\mathcal{A}^{\sfT} \mathcal{A}] (B^j)^{-1}\#B^j).
$$
Moreover, since $B_j$ is positive definite (by assumption), it also follows that  $[\mathcal{A}^{\sfT} \mathcal{A}] (B_j)$ is positive definite. Consequently, their geometric mean  $([\mathcal{A}^{\sfT} \mathcal{A}] (B^j)^{-1}\#B^j)$ is positive definite, and we get
 \be\label{csvs}
 ([\mathcal{A}^{\sfT} \mathcal{A}] (B^j)^{-1}\#B^j)^{-1} B^j([\mathcal{A}^{\sfT} \mathcal{A}] (B^j)^{-1}\#B^j)^{-1}=\mathcal{A}^\top(X_{: j}).
\ee
On the other hand, by definition, the geometric mean $C\#D$ is the unique positive solution of  
$XC^{-1}X=D,$ i.e., we have that $(C\#D)C^{-1}(C\# D)=D.$ In particular, we get that
\be\label{vsbvdsfb}
(B_j^{-1}\#[\mathcal{A}^{\sfT} \mathcal{A}](B_j)) B_j(B_j^{-1}\#[\mathcal{A}^{\sfT} \mathcal{A}](B_j))=[\mathcal{A}^{\sfT} \mathcal{A}](B_j).
\ee
Using the inversion property $(X\#Y)^{-1}=X^{-1}\#Y^{-1}$ and the commutativity property $X\#Y=Y\#X$ of the geometric mean, it follows that the left-hand side of Equation \eqref{csvs} is equal to the left-hand side of Equation \eqref{vsbvdsfb}. Combining the two equations, it  follows that $ \mathcal{A}^\top(X_{: j})=[\mathcal{A}^{\sfT} \mathcal{A}](B_j),
$ and the proof is concluded. 
\end{proof}

\fi
\section{Applications of the MMU algorithm}\label{sec:applications}
\paragraph{Block-diagonal (BD) PSD factorizations.} If the  MMU algorithm is initialized with~BD  positive definite   matrices  with   {\em the same} block structure, the  BD structure is preserved at each update. 
Indeed, as $[\mathcal{A}^{\sfT} \mathcal{A}] (\bo)=\sum_{k=1}^m\tr( A_k\bo) A_k$ we see that $[\mathcal{A}^{\sfT} \mathcal{A}] (\bo)^{-1}$, and thus, $([\mathcal{A}^{\sfT} \mathcal{A}] (B_j))^{-1}\#(B_j)$ share the same block structure. Lastly, by definition of  the MMU algorithm~\eqref{updaterule}, $\bn$ is also  block-diagonal with the same structure. Thus, 
 if initialized with~BD-PSD matrices, the MMU algorithm  gives a method for computing a BD-PSD factorization.
%

\paragraph{Recovering  Lee-Seung's algorithm for NMF.} Diagonal matrices can be considered as  block-diagonal in a  trivial manner. Nevertheless, by the preceding discussion, if initialized with diagonal PSD matrices, the iterates of the MMU algorithm remain diagonal PSD throughout. In this special case, our MMU algorithm  
reduces to   Lee-Seung's~(LS) seminal Multiplicative Update algorithm  for   computing NMFs \cite{LS00}.  LS's algorithm is perhaps the most widely used  method for computing  NMFs    as it has succeeded to identify  meaningful features in  a diverse collection of real-life data sets and is extremely   simple  to implement. Specifically, LS's  updates are
\be\label{LSalg}
A\leftarrow  A \circ {XB^\top \over ABB^\top} \ \text{ and } \  B\leftarrow  B\circ {A^\top X\over A^\top A B},
\ee
where $X\circ Y, X/Y$ denote  the componentwise multiplication, division of two matrices respectively. Setting $A_i={\rm diag}(a_i)$ and $B_j={\rm diag}(b_j)$,  
the MMU algorithm updates $B_j$ as
$$
B_j \leftarrow B_j\Big(\sum_{i=1}^m \la a_i, b_j\ra A_i\Big)^{-1}\Big(\sum_{i=1}^mX_{ij}A_i\Big),
$$
 which is also a diagonal PSD matrix. Setting $A^\sfT =\begin{pmatrix} a_1^\sfT & \ldots & a_m^\sfT \end{pmatrix}$ and $ B=\begin{pmatrix}b_1 & \ldots & b_n \end{pmatrix}$, we immediately recover  LS's  update rule~\eqref{LSalg}. 

\paragraph{PSD factorizations for nonnegative tensors.}  Motivated by PSD factorizations of nonnegative matrices,  \cite{tensor} define  an $r$-dimensional  PSD factorization of a nonnegative tensor $T$ (with $n$ indices of dimension $d$) as a collection of PSD matrices $C^{(1)}_{i_1}, \ldots, C^{(n)}_{i_n} \in \psd^r $ for all $i_k\in [d]$  such that
  \be\label{tensor}
     T_{i_1, \ldots, i_n} = \sum_{i, j=1}^r C^{(1)}_{i_1}(i,j) \cdots C^{(n)}_{i_n}(i,j), \  \text{ for all } i_k \in [d].
     \ee
Equivalently, and more succinctly, we may write \eqref{tensor} as 
$$   T_{i_1\ldots i_n}={\rm sum}(C^{(1)}_{i_1}\circ \cdots \circ C^{(n)}_{i_n}),  \text{ for all }  i_k \in [d], $$ where $\circ $ denotes the Schur product of matrices and ${\rm sum}(X)=\sum_{ij}X_{ij}$. The motivation for studying tensor PSD factorizations  comes from the fact that they characterize   the quantum correlation complexity for generating multipartite classical distributions \cite{tensor}. 
  We now show how our MMU algorithm can be used as a primitive to calculate tensor  PSD factorizations.  
   For simplicity of presentation we restrict  to  $n=3$     and~consider
    \be\label{vsvs}
    \inf  \sum_{i_1, i_2, i_3}\left( T_{i_1 i_2i_3} -{\rm sum}(C^{(1)}_{i_1}\circ C^{(2)}_{i_2} \circ C^{(3)}_{i_3})\right)^2  \quad  \text{subject to} \quad  C^{(1)}_{i_1}, C^{(2)}_{i_2},  C^{(3)}_{i_3} \in \psd^r.
    \ee
   As in the case of PSD factorizations, to solve \eqref{vsvs} we employ a block coordinate descent approach. Specifically,  fixing all matrices $C^{(1)}_{i_1}, C^{(2)}_{i_2}$ the optimization problem \eqref{vsvs} is separable wrt each $C^{(3)}_{i_3}$ for all $i_3\in~[d].$ 
  Thus,   we need to solve the optimization problem
    \be\label{vsf}
    \arg \inf_C \sum_{i_1, i_2}\left(T_{i_1 i_2i_3} - \langle C^{(1)}_{i_1}\circ C^{(2)}_{i_2}, C\rangle\right)^2 \quad  \text{ subject to } \quad C\in \psd^r.
    \ee
Defining the  map  $\mca: \mathbb{S}^r \to \mathbb{R}^{d^2}$ where $  X\mapsto (\la X, C^{(1)}_{i_1}\circ C^{(2)}_{i_2}\ra)_{i_1,i_2} $, problem \eqref{vsf} is 
equivalent to
\be\label{csvsdv}
\arg \inf \|{\rm vec}(T_{::i_3})-\mca(C)\|^2_2 \quad \text{ subject to} \quad C\in \psd^r,
\ee
for all $i_3\in [d]$. Note that $\mca^\sfT: \mathbb{R}^{d^2} \to  \mathbb{S}^r $ where $   x=(x_{x_1x_2})\mapsto \sum_{i_1, i_2\in [d]}x_{i_1i_2}C^{(1)}_{i_1}\circ C^{(2)}_{i_2} $ is a PSD matrix, as the Schur product of PSD matrices is PSD. Thus,  Theorem \ref{LSstylethm} gives  an update rule that preserves PSDness and  for which the objective function \eqref{csvsdv}  is nonincreasing.

\section{Numerical experiments}\label{sec:numerical}

\paragraph{Distance matrices.}  
Let $v \in \mathbb{R}^{n}$ be a vector and let $M$ be a $n\times n$ matrix whose entries are $M_{ij} = (v_i-v_j)^2$.  $M$ is known as a \emph{distance matrix} and a 2-dimensional PSD factorization is  $M_{ij} = \mathrm{tr}(A_i B_j)$, where
$A_i = \left(\begin{smallmatrix}1 \\ v_i \end{smallmatrix}\right) \left(\begin{smallmatrix} 1 \\ v_i \end{smallmatrix}\right)^\sfT $ and $ 
B_j = \left(\begin{smallmatrix} -v_j \\ 1 \end{smallmatrix}\right) \left(\begin{smallmatrix} -v_j \\ 1 \end{smallmatrix}\right)^\sfT. $
We generate a random  $v \in \mathbb{R}^{n}$ with $n=20$ where each entry is drawn from the standard normal distribution.  We apply our algorithm to compute a 2-dimensional  factorization whereby we perform $500$ iterations over $50$ random initializations.  For conditioning reasons, we perform a damped update whereby we add $\epsilon I$, $\epsilon = 10^-8$, to each matrix variable at every iterate.  We compute the normalized squared error loss of the factorization from the data matrix, and we plot the error over each iteration $\mathrm{Err} =  \sum_{i,j} (\mathrm{tr}( A_i B_j ) - M_{ij} )^2 /\sum_{i,j} M_{ij}^2$  in Figure \ref{fig:distancematrx_errors}.
Our experiments  suggest that, with sufficient random initializations, our algorithm finds a PSD factorization that is close to being exact. 

\begin{figure}[h!]
\centering
\includegraphics[width=0.4\textwidth]{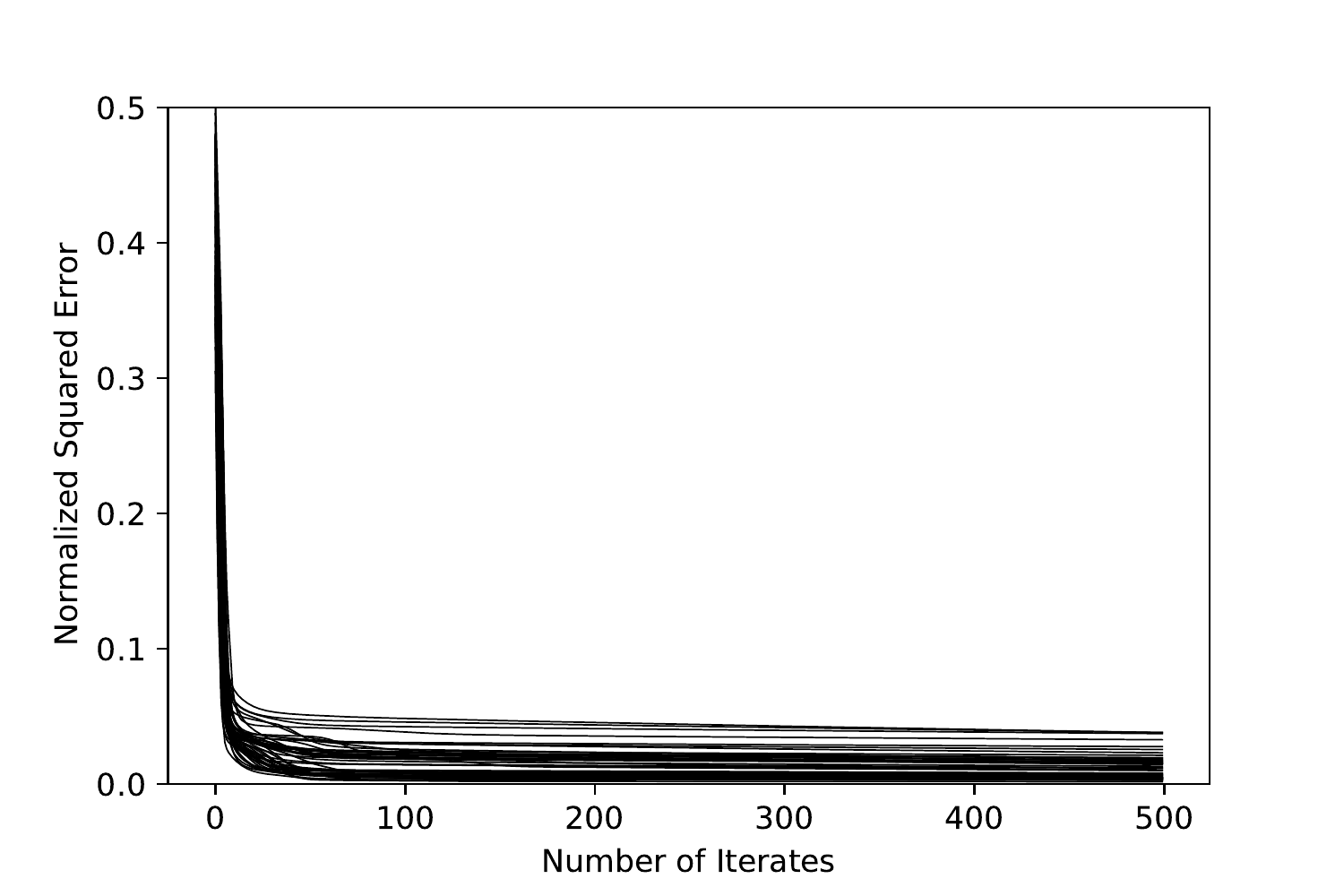}
\includegraphics[width=0.4\textwidth]{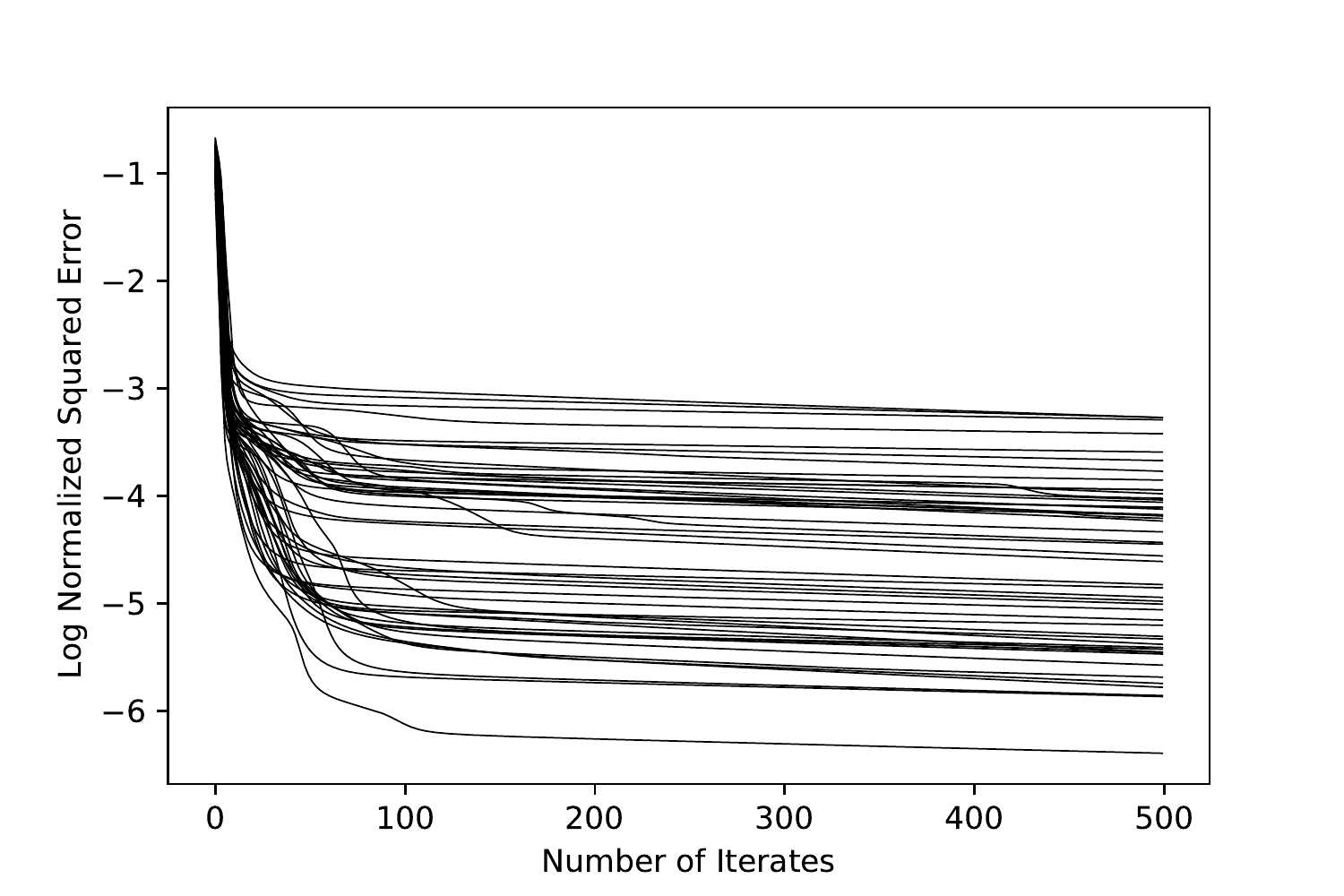}
\caption{Performance of the MWU algorithm for computing a PSD factorization of a distance matrix.  Different curves correspond to different random initializations. }
\label{fig:distancematrx_errors}
\end{figure}


\paragraph{CBCL Face Image Dataset.}  In our second  experiment we apply  the MMU  method to compute a PSD factorization  of face image data from the CBCL Face Database \cite{CBCL}. 
The objective of this experiment is to explain how computing a PSD factorization can be viewed as a representation learning algorithm that generalizes NMF.  
 The CBCL dataset comprises $2429$ images of faces, each of size $19 \times 19$ pixels,  linearly scaled so that the pixel intensity has mean $0.5$ and standard deviation $0.25$, with values  subsequently clipped at $[0,1]$.  The resulting data matrix has size $361 \times 2429$.   Our experiments were conducted in Python on an Intel 7-th Gen i7 processor at 2.8GHz.

Recall that    an $r$-dimensional PSD factorization   of $X$ given by   $\{A_i\}, \{B_j\}$ 
 gives rise   to a representation  
$X_{:j}=\sum_i\lambda_i\cala(u_iu_i^\sfT)$, where  $B_j=\sum_{i=1}^r\lambda_iu_iu_i^\sfT$ is the  spectral decomposition  
of~$B_j$ and $\cala: \mathbb{S}^r \to \R^m$ is the   linear mapping satisfying $Z\mapsto (\tr(A_1Z), \ldots, \tr(A_mZ)).$
In all figures in this section, given a specific image $X_{:j}$ in the CBCL dataset, we illustrate the images corresponding to the building  blocks identified by the MMU algorithm, namely $\cala(u_iu_i^\sfT)$ for all eigenvectors of $B_j$.

 As our baseline, we compute a 27-dimensional  NMF of the CBCL data matrix  over $500$ iterations. In Figure \ref{fig:1x1_full} below 
 we illustrate  the decomposition of one of  images from  the dataset  into these $27$ constituents. 
 
\begin{figure}[h]
\centering
\includegraphics[width=1\textwidth]{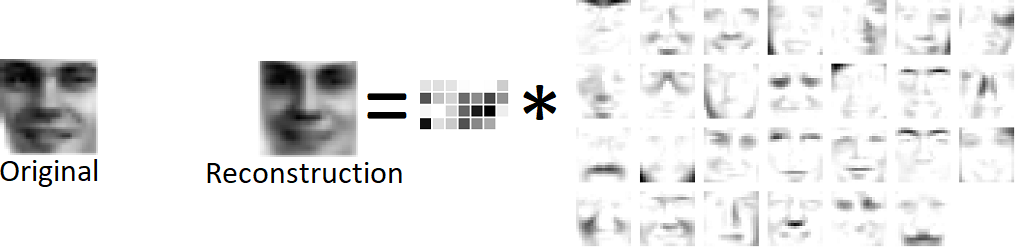}
\caption{Image decomposition into  building blocks learned from 27-dimensional NMF.}
\label{fig:1x1_full}
\end{figure} 
Next, we calculate a 7-dimensional PSD factorization of the CBCL data matrix over 500 iterations and 
illustrate the decomposition of the same image in the new basis in Figure \ref{fig:PSD_full}. 
   We compute a factorization of size $7$ to match the number of degrees of freedom in Figure \ref{fig:1x1_full}.
We note that the constituents learned from the PSD factorization  are less interpretable than those learned using~NMF -- here, we use the word `interpretable' in a loose sense to mean that some factors learned from NMF are easily interpreted to describe local facial features such as the eyes or the nose whereas the constituents learned from PSD are of a global nature. Nevertheless, we note that this  phenomenon that has been  been also observed for NMF applied to  datasets beyond  CBCL \cite{nonatomicNMF}. 
\begin{figure}[h]
\centering
\includegraphics[width=1\textwidth]{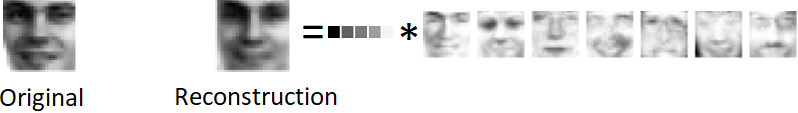}
\caption{Image decomposition  into  building blocks learned from 7-dimensional PSD factorization.}
\label{fig:PSD_full}
\end{figure}

%
%
%
%

 Lastly, we use the MMU algorithm to 
 compute a block-diagonal PSD factorization with  $9$ blocks, each  of size $2$, over $500$ iterations (the number of degrees of freedom is comparable), which we illustrate the decomposition in Figure \ref{fig:2x2_full}.  In this instance, the constituents contain more localized~features.  
 
 \begin{figure}[h]
\centering
\includegraphics[width=1\textwidth]{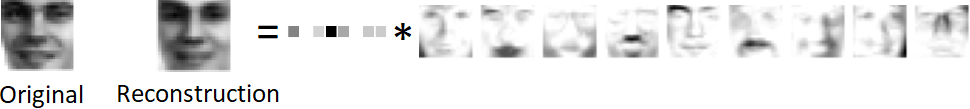}
\caption{Image decomposition  into  building blocks learned using  $2\times 2$-block PSD factorization.}
\label{fig:2x2_full}
\end{figure}

 An advantage of learning a continuum of basic building blocks is that one can express certain geometries in the data that is otherwise not possible using a finite number of building blocks -- as an illustration of this intuition, in Figure \ref{fig:2x2_atoms}, we show a continuum of atoms corresponding to a single $2\times 2$ block, which  capture a continuum between the nose and the nostrils.

\begin{figure}[h!]
\centering
\includegraphics[width=1\textwidth]{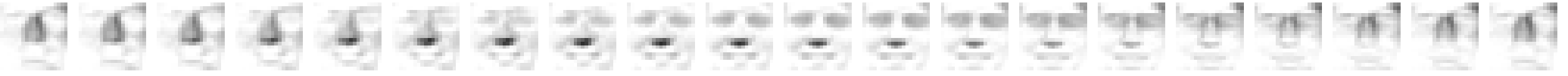}
\caption{Visualization of continuum of  building blocks learned using $2\times 2$-block PSD factorization.}
\label{fig:2x2_atoms}
\end{figure}

\paragraph{Acknowledgments.}  YS gratefully acknowledges Ministry of Education (Singapore) Academic Research Fund (Tier 1) R-146-000-329-133.  AV gratefully acknowledges Ministry of Education (Singapore) Start-Up Research Grant SRG ESD 2020 154 and NRF2019-NRF-ANR095 ALIAS grant. 

\bibliography{PSD_MM,NMF}

\appendix

\section{Proof of the domination property}\label{algorithmic}
In this section we prove  the  domination property, which is the last remaining ingredient in the proof of Theorem \ref{LSstylethm}.

\begin{lemma} \label{thm:domination_general}
Fix $r\times r$ positive definite matrices $A_1,\ldots,A_m$ and consider the linear mapping 
	 	\begin{equation*}
	\mathcal{A} : \mathbb{S}^{r} \rightarrow \mathbb{R}^{m} \text{ where } Z\mapsto \left(
	\langle A_1,Z \rangle, \ \ldots \ 
	,\langle A_m,Z \rangle
	 \right).
\end{equation*}
 Moreover, fix an $r\times r$ positive definite matrix   $B$  and set
			$$W = [\mathcal{A}^{\sfT} \mathcal{A}] (B)\#B^{-1}.$$ 
Then, we have that
	\begin{equation}\label{eq:main}
		\langle Z, WZW \rangle \geq \langle Z, [\mathcal{A}^{\sfT} \mathcal{A}] (Z) \rangle,  \text{ for all } Z \in \mathbb{S}^{r}.
	\end{equation}
\end{lemma}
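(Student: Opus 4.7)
My plan is to reduce the claimed inequality to an operator-norm bound on a canonical self-adjoint superoperator on $\mathbb{S}^r$, and then to establish that bound via a Perron--Frobenius-style argument tailored to the PSD cone.

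First, I would perform the congruence substitution $\hat A_i := W^{-1/2}A_iW^{-1/2}$ (each PSD) and $\hat Z := W^{1/2} Z W^{1/2}$. By cyclicity of the trace, $\tr(A_iZ)=\tr(\hat A_i\hat Z)$ and $\tr(ZWZW)=\tr(\hat Z^2)$, so \eqref{eq:main} is equivalent to
\[
\sum_{i=1}^m \tr(\hat A_i \hat Z)^2 \leq \tr(\hat Z^2) \quad \text{for all } \hat Z \in \mathbb{S}^r.
\]
Defining the superoperator $\Phi:\mathbb{S}^r\to\mathbb{S}^r$ by $\Phi(Y):=\sum_{i=1}^m \tr(\hat A_iY)\,\hat A_i$, this amounts to the operator inequality $\Phi\preceq I$ on the Hilbert space $\mathbb{S}^r$ equipped with the Frobenius inner product. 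Two properties of $\Phi$ are immediate and central: it is manifestly self-adjoint and positive semidefinite as a superoperator, and since each $\hat A_i\succeq 0$, it sends the PSD cone of $\mathbb{S}^r$ into itself.

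The crucial step is to extract a strictly positive-definite fixed point of $\Phi$ from the Riccati identity. By definition of the matrix geometric mean (see \eqref{riccati}), $W$ satisfies $WM^{-1}W=B^{-1}$, equivalently $WBW = M = \sum_i \tr(A_iB)A_i$. Conjugating this identity by $W^{-1/2}$ and using $\tr(A_iB)=\tr(\hat A_i\hat B)$ with $\hat B:=W^{1/2}BW^{1/2}\succ 0$, I obtain
\[
\hat B \;=\; \sum_{i=1}^m \tr(\hat A_i\hat B)\,\hat A_i \;=\; \Phi(\hat B).
\]
Hence $\hat B$ is a positive-definite eigenvector of $\Phi$ with eigenvalue $1$, so $\Phi$ has operator norm at least $1$.

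To finish, I would show that $\Phi$ has no eigenvalue strictly greater than $1$ by a Perron--Frobenius-style contradiction. Suppose $\Phi(Y)=\lambda Y$ with $\lambda>1$ and $Y\in\mathbb{S}^r\setminus\{0\}$. Since $\hat B$ lies in the interior of the PSD cone, there exists $\epsilon>0$ with $\hat B\pm\epsilon Y\succeq 0$. Because $\Phi^n$ preserves the PSD cone for every $n$, applying it to $\hat B\pm\epsilon Y$ yields $\hat B\pm\epsilon\lambda^n Y\succeq 0$, i.e.\ $-\hat B/(\epsilon\lambda^n)\preceq Y\preceq \hat B/(\epsilon\lambda^n)$; letting $n\to\infty$ and using $\lambda>1$ forces $Y=0$, a contradiction. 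Combined with the self-adjointness and positive semidefiniteness of $\Phi$, this gives $\Phi\preceq I$ and unwinds to~\eqref{eq:main}.

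The principal obstacle is the Perron--Frobenius step, which relies essentially on both the cone-preservation of $\Phi$ and the existence of the \emph{strictly} positive fixed point $\hat B$ produced by the Riccati identity; drop either property and the argument collapses. An alternative route, likely closer in spirit to the appendix's use of Lieb's Concavity Theorem, would be to derive $\Phi\preceq I$ directly from the joint concavity of the matrix geometric mean along a parametric family of operators, but the Perron--Frobenius path above strikes me as the most transparent.
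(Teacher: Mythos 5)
Your proof is correct, and it takes a genuinely different route from the one in the paper. The paper's proof first normalizes $B$ to the identity via the substitution $\tilde{\mathcal{A}}(Z)=\mathcal{A}(B^{1/2}ZB^{1/2})$, and then establishes the resulting inequality $\tr\big(ZM^{1/2}ZM^{1/2}\big)\ge\sum_k\tr(A_kZ)^2$ with $M=\sum_k\tr(A_k)A_k$ by combining Lieb's Concavity Theorem (to distribute $M^{1/2}$ across the summands) with von Neumann's trace inequality and Cauchy--Schwarz (to bound each summand). You instead normalize $W$ to the identity, recast the claim as the superoperator bound $\Phi\preceq I$ for the self-adjoint, cone-preserving Gram operator $\Phi(Y)=\sum_i\tr(\hat A_iY)\hat A_i$, and observe that the Riccati characterization of the geometric mean says precisely that $\hat B=W^{1/2}BW^{1/2}\succ 0$ is a fixed point of $\Phi$; a Perron--Frobenius argument (a cone-preserving self-adjoint map with an \emph{interior} fixed point at eigenvalue $1$ has spectral radius $1$) then closes the proof. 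All the individual steps check out: $WM^{-1}W=B^{-1}$ is the correct Riccati identity for $W=M\# B^{-1}$, it rearranges to $WBW=M$ and hence $\Phi(\hat B)=\hat B$, and the sandwich $-\hat B/(\epsilon\lambda^n)\preceq Y\preceq \hat B/(\epsilon\lambda^n)$ indeed forces $Y=0$ for any putative eigenvalue $\lambda>1$, while self-adjointness upgrades ``no eigenvalue exceeds $1$'' to $\Phi\preceq I$. Your argument is more elementary --- it avoids Lieb's theorem entirely --- and arguably more illuminating, since it exposes the geometric mean as exactly the congruence that turns $B$ into an eigenvector of the quadratic-form operator with eigenvalue $1$, so the majorization $\Phi\preceq I$ is seen to be tight in the direction of $\hat B$. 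What the paper's route buys in exchange is an explicit term-by-term chain of trace inequalities quantifying the gap, which is closer in spirit to the classical analysis of Lee--Seung-type auxiliary functions and to the operator-inequality toolkit the authors invoke elsewhere.
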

\begin{proof}
First note that $W$ is well-defined as  $[\mathcal{A}^{\sfT} \mathcal{A}] (B)=\sum_{k=1}^m\la A_k,B\ra A_k$ is positive definite. First,  we  reduce the theorem to the special case  where $B=I$.
For this, define the linear map 
$$\tilde{\mathcal{A}} : \mathbb{S}^{r} \rightarrow \mathbb{R}^{m}, \quad  Z \mapsto  \mathcal{A}(B^{1/2}ZB^{1/2}).$$
  Noting that 
$$\tilde{\mathcal{A}}(I)= \cala(B) \ \text{ and } \ \tilde{\mathcal{A}}^\sfT(Z)=B^{1/2}\cala^\sfT(Z)B^{1/2},$$
it follows that 
	\begin{equation*}
		\begin{aligned}
			W & = [\mathcal{A}^{\sfT} \mathcal{A}] (B)\#B^{-1}=B^{-1}\# [\mathcal{A}^{\sfT} \mathcal{A}] (B) \\
			  & = B^{-1/2} (B^{1/2} [\mathcal{A}^{\sfT} \mathcal{A}] (B) B^{1/2})^{1/2} B^{-1/2} \\
			  & = B^{-1/2} ([\tilde{\mathcal{A}}^{\sfT} \tilde{\mathcal{A}}] (I) )^{1/2} B^{-1/2}.
		\end{aligned}
	\end{equation*}
	Furthermore, setting 
$$\tilde{Z} = B^{-1/2} Z B^{-1/2},$$ we get that 
	\begin{equation*}
		\begin{aligned}
			\langle Z, WZW \rangle ~=~ & \tr(ZWZW) \\
			~=~ & \tr(ZB^{-1/2} ([\tilde{\mathcal{A}}^{\sfT} \tilde{\mathcal{A}}] (I) )^{1/2} B^{-1/2}ZB^{-1/2} ([\tilde{\mathcal{A}}^{\sfT} \tilde{\mathcal{A}}] (I) )^{1/2} B^{-1/2}) \\
			~=~ & \tr(B^{-1/2}ZB^{-1/2} ([\tilde{\mathcal{A}}^{\sfT} \tilde{\mathcal{A}}] (I) )^{1/2} B^{-1/2}ZB^{-1/2} ([\tilde{\mathcal{A}}^{\sfT} \tilde{\mathcal{A}}] (I) )^{1/2} ) \\
			~=~ & \langle \tilde{Z}, ([\tilde{\mathcal{A}}^{\sfT} \tilde{\mathcal{A}}] (I) )^{1/2} \tilde{Z} ([\tilde{\mathcal{A}}^{\sfT} \tilde{\mathcal{A}}] (I) )^{1/2} \rangle. 
		\end{aligned}
	\end{equation*}
	Similarly
	\begin{equation*}
		\begin{aligned}
			\langle Z, [\mathcal{A}^{\sfT} \mathcal{A}] (Z) \rangle & = \langle B^{1/2} B^{-1/2} Z B^{-1/2} B^{1/2}, [\mathcal{A}^{\sfT} \mathcal{A}] (B^{1/2} B^{-1/2} Z B^{-1/2} B^{1/2}) \rangle \\
			& = \langle \tilde{Z} , B^{1/2}[\mathcal{A}^{\sfT} \mathcal{A}] (B^{1/2} \tilde{Z} B^{1/2}) B^{1/2} \rangle \\
			& = \langle \tilde{Z} , [\tilde{\mathcal{A}}^{\sfT} \tilde{\mathcal{A}}] (\tilde{Z}) \rangle.
		\end{aligned}
	\end{equation*}

To conclude the proof it remains to prove the theorem statement in the special case where $B=I$. Note that when $B=I$  we have 
$$W = [\mathcal{A}^{\sfT} \mathcal{A}] (I)\#I^{-1}=[\mathcal{A}^{\sfT} \mathcal{A}] (I)\#I=I\#[\mathcal{A}^{\sfT} \mathcal{A}] (I)=([\mathcal{A}^{\sfT} \mathcal{A}] (I))^{1/2},$$
where in the second to last equality we used that the geometric mean is symmetric in its two arguments and the least equality follows by the definition of the geometric mean \eqref{GM}.

Furthermore, by the definition  of the map $\cala$ we have that 
$$[\cala^\sfT\cala](Z)=\sum_{k=1}^m\la A_k,Z\ra A_k,$$
which in particular implies that  
	$$W = ([\mathcal{A}^{\sfT} \mathcal{A}] (I))^{1/2} = \lp\sum_{k} \tr(A_k)  A_k\rp^{1/2}.$$
	Thus, in the case where $B=I$, the inequality we need to prove (cf. Equation \eqref{eq:main}) specializes to: 
			
			\be\label{main:inequality}
			\tr\lp Z \lp\sum_{k} \tr(A_k)  A_k\rp^{1/2} Z \lp\sum_{k} \tr(A_k)  A_k\rp^{1/2}\rp   \geq \sum_k \tr(A_kZ)^2,  \text{ for all } Z \in \mathbb{S}^{r}.
 \ee
As a special case of  Lieb's concavity theorem (e.g. see \cite[Theorem 6.1]{carlen}) we have that for any fixed real symmetric matrix $Z\in \bbS^n$, the real-valued map
$$(X,Y)\mapsto \tr(ZX^{1/2}ZY^{1/2})$$ 
is concave over  the domain  $\bbS^n_{++}\times \bbS^n_{++}$. In particular,  for any 
 $(X_1, Y_1), (X_2, Y_2)\in \bbS^n_{++}\times \bbS^n_{++}$ and $\lam \in [0,1]$ we have that  
 $$\tr(Z(\lam X_1+(1-\lam)X_2)^{1/2}Z(\lam Y_1+(1-\lam)Y_2)^{1/2})\ge \lam \tr(ZX_1^{1/2}ZY_1^{1/2})+  (1-\lam) \tr(ZX_2^{1/2}ZY_2^{1/2}).$$
 In turn, setting $X_1=Y_1={\tr(A_1)\over \lam}A_1$ and $X_2=Y_2={\tr(A_2)\over 1-\lam}A_2$  we get that:
  $$
  \begin{aligned}
  &\tr(Z(\tr(A_1)A_1+\tr(A_2)A_2)^{1/2}Z(\tr(A_1) A_1+\tr(A_2)A_2)^{1/2})\ge \\
  &\tr(A_1) \tr(ZA_1^{1/2}ZA_1^{1/2})+   \tr(A_2)\tr(ZA_2^{1/2}ZA_2^{1/2}). 
  \end{aligned}
  $$
  Finally, by induction it follows that:
   \begin{equation}\label{sdvsvbs}
   \tr\lp Z(\sum_k \tr(A_k)A_k)^{1/2}Z( \sum_k (\tr(A_k)A_k)^{1/2}\rp \ge \sum_k \tr(A_k) \tr(ZA_k^{1/2}ZA_k^{1/2}),
   \end{equation}
  for any family of positive definite matrices $A_1, \ldots, A_m$ and any fixed matrix $Z$. 
    Based on \eqref{sdvsvbs}, to prove   \eqref{main:inequality} it remains to show that
    
    \be\label{cdvd}    \sum_k \tr(A_k) \tr(ZA_k^{1/2}ZA_k^{1/2})\ge \sum_k \tr(A_kZ)^2. 
    \ee
    We prove  inequality \eqref{cdvd} term-by-term, i.e., we show that 
        \be\label{cdsvds}
         \tr(A_k) \tr(ZA_k^{1/2}ZA_k^{1/2})=\tr(A_k)\tr((ZA_k^{1/2})^2)\ge  \tr(A_kZ)^2. 
         \ee
        	For this we use that for any  $X,Y \in \mathbb{S}^{n}$ we have 
	\begin{equation}\label{csvsd}
		\tr(XY)^2
		 \leq \tr(X^2) \cdot \tr(Y^2). 
	\end{equation}
Indeed,  let  $\sigma_1 \leq \ldots \leq \sigma_n$ be the singular values of $X$ and $\mu_1 \leq \ldots \leq \mu_n$ be the singular values of $Y$.  Then
\begin{equation*}
	\tr(XY)^2 \leq \left( \sum_{i=1}^{ n} \sigma_i \mu_i \right)^2 \leq \left( \sum_{i=1}^{n} \sigma_i^2 \right) \left( \sum_{i=1}^{n} \mu_i^2 \right) = \tr(X^2) \cdot \tr(Y^2).
\end{equation*}
The first inequality follows from the von Neumann's trace inequality (see e.g. \cite{bhatia}) and the second inequality follows from the Cauchy-Schwarz inequality.

Lastly, by \eqref{csvsd} we have that 
$$\tr(A_kZ)^2=\tr(A_k^{1/2}(A_k^{1/2}Z))\le \tr(A_k)\tr((ZA_k^{1/2})^2),$$
which is exactly 
\eqref{cdsvds}.
\end{proof}

\section{Fixed-points of the MWU algorithm are KKT points}
In this section we give the proof of Theorem \ref{fixedpointsthm}. 

\begin{lemma} \label{thm:kkt_actualresult} Let $\mathcal{A}$ be the linear map $\mathcal{A}(Z) = (\mathrm{tr}(A_1 Z), \ldots, \mathrm{tr}(A_m Z))^\sfT$, where $A_i$ are positive definite matrices, and let $x$ be a vector with positive entries.  Suppose $B$ is a positive definite matrix satisfying $B =W (\mathcal{A}^\top x) W$, where $W = ([\mathcal{A}^{\sfT} \mathcal{A}] (B))^{-1}\#B$.  Then $\cala^\sfT x =[\cala^\sfT \cala] (B)$.
\end{lemma}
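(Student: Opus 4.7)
The plan is to exploit the defining property of the matrix geometric mean (the Riccati equation) together with the invertibility of $W$ to directly cancel off $W$ from both sides of the fixed-point identity.

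First I would set $C := [\mathcal{A}^{\sfT}\mathcal{A}](B)$, so that by definition $W = C^{-1} \# B$. Since the $A_i$ are positive definite and $B$ is positive definite, we have $C = \sum_k \langle A_k, B\rangle A_k$ positive definite, hence $C^{-1}$ is positive definite, and in particular $W$ is a well-defined positive definite matrix. This also ensures $W$ is invertible, a fact I will use in the last step.

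Next I would invoke the Riccati characterization of the matrix geometric mean recalled in~\eqref{riccati}: the matrix $P \# Q$ is the unique positive definite solution $X$ of $X P^{-1} X = Q$. Applying this with $P = C^{-1}$ and $Q = B$ gives that $W = C^{-1} \# B$ satisfies $W C W = B$. On the other hand, the hypothesis of the lemma is precisely $B = W(\mathcal{A}^{\sfT} x) W$. Combining these two identities yields
\begin{equation*}
W C W \;=\; W (\mathcal{A}^{\sfT} x) W.
\end{equation*}

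Finally, since $W$ is positive definite and hence invertible, I can left- and right-multiply by $W^{-1}$ to conclude $C = \mathcal{A}^{\sfT} x$, i.e., $[\mathcal{A}^{\sfT}\mathcal{A}](B) = \mathcal{A}^{\sfT} x$, which is exactly the desired conclusion. There is no real obstacle here; the only non-routine ingredient is the unicity/Riccati property of the geometric mean, which is quoted in the paper just before Theorem~\ref{LSstylethm}, and the positivity of $W$ needed to invert it, both of which are in hand.
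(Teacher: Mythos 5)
Your proof is correct and follows essentially the same route as the paper: both arguments hinge on the Riccati/unicity characterization of the geometric mean and the invertibility of $W$. The only cosmetic difference is that you apply the Riccati equation directly to $W = C^{-1}\# B$ to get $WCW = B$ and then cancel, whereas the paper first rewrites the fixed-point identity as $\mathcal{A}^{\sfT}x = W^{-1}BW^{-1}$ and applies the same property to $W^{-1}$.
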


\begin{proof}  Since $B$ is positive definite, it follows that $[\mathcal{A}^{\sfT} \mathcal{A}] (B)=\sum_{i=1}^m\la A_i,B\ra A_i$ is also positive definite, and thus $W = ([\mathcal{A}^{\sfT} \mathcal{A}] (B))^{-1}\#B$ is positive definite.  We then have that
$$\mathcal{A}^\top x=W^{-1}BW^{-1}=(B^{-1}\#[\mathcal{A}^{\sfT} \mathcal{A}] (B))B(B^{-1}\#[\mathcal{A}^{\sfT} \mathcal{A}](B))=[\mathcal{A}^{\sfT} \mathcal{A}](B),$$
where 
the last equality follows by the unicity property of the matrix geometric mean \eqref{riccati}.
\end{proof}

\end{document}